\DeclareMathOperator{\cl}{\sf cl}
\newcommand{\Z}{\ensuremath{\mathbf Z}}
\renewcommand{\phi}{\varphi}
\newcommand{\tc}{{\sf {TC}}}
\newcommand{\cat}{{{\sf {cat}}}}
\renewcommand{\k}{{\sf k}}
\DeclareMathOperator{\zcl}{\sf zcl}
\newtheorem{theorem}{Theorem}
\newtheorem{proposition}{Proposition}
\newtheorem{lemma}{Lemma}
\newtheorem{corollary}{Corollary}
\theoremstyle{definition}
\newtheorem{definition}{Definition}
\newtheorem{question}{Question}
\begin{document}
\title{Generating functions and topological complexity}
\author{Michael Farber}
\address{School of Mathematical Sciences \\
Queen Mary University of London\\
London, E1 4NS, 
United Kingdom}
\email{m.farber@qmul.ac.uk}
\thanks{M. Farber was partially supported by a grant from the Leverhulme Foundation}

\author {Daisuke Kishimoto}
\address{Department of Mathematics\\
 Graduate School of Science\\
Kyoto University, Kyoto 606-8502, Japan}
\email{kishi@math.kyoto-u.ac.jp}

\author {Donald Stanley}
\address{Department of Mathematics and Statistics,
University of Regina \\
3737 Wascana Parkway,
Regina, Saskatchewan,  Canada}
\email{Donald.Stanley@uregina.ca}

\thanks{The authors thank Fields Institute for Research in Mathematical Sciences for hospitality} 
\maketitle

\begin{abstract} 
We examine the rationality conjecture raised in \cite{FO} which states that (a) the formal power series 
$\sum_{r\ge 1} \tc_{r+1}(X)\cdot x^r$ represents a rational function of $x$ with a single pole of order 2 at $x=1$ and (b) the leading coefficient of the pole equals $\cat(X)$. Here $X$ is a finite CW-complex and for $r\ge 2$ the symbol $\tc_r(X)$ denotes its $r$-th sequential topological complexity. 
We analyse an example (violating the Ganea conjecture) and conclude that part (b) of the rationality conjecture is false in general. Besides, 
we establish a cohomological version of the rationality conjecture.
\end{abstract}
\section{Introduction}

For a topological space $X$ the symbol $X^r$ stands for the $r$-th Cartesian power of $X$ and $\Delta_r\colon X\rightarrow X^r$
denotes the diagonal map $\Delta_r(x) =(x, x, \dots, x)$, where $x\in X$. 
For $r\ge 2$ the {\it $r$-th topological complexity of $X$} is an integer $\tc_r(X)\ge 0$ defined as 
the sectional category of the  diagonal map 
$\Delta_r: X\to X^r$. In other words, $\tc_r(X)$ is the smallest $k\ge 0$ such that there exists an open cover 
$X^r=U_0\cup U_1\cup \dots\cup U_k$ with the property that each set $U_i$ admits a homotopy section $s_i:U_i\to X$ of $\Delta_r$, 
 i.e. the composition $\Delta_r\circ s_i$ is homotopic to the inclusion $U_i\to X^r$ for $i=0, 1, \dots, k$. The above definition is equivalent to the one given in \cite{R}. For $r>2$ the number $\tc_r(X)$ is sometimes called {\it the higher or sequential topological complexity}. 
$\tc_r(X)$ is a homotopy invariant of $X$. 
 For a mechanical system having $X$ as its configuration space, $\tc_r(X)$ can be interpreted as complexity of motion planning 
 algorithms taking $r$ states $(x_1, x_2, \dots, x_r)\in X^r$ as input and producing a continuous motion of each of the points $x_i$ moving to 
 a {\it consensus state} $y\in X$. See \cite{R},  \cite{GGY}, \cite{FO} for more detail.

Putting all the numbers  $\tc_r(X)$ together, we obtain a formal power series
\begin{eqnarray}\label{ftc}
\mathcal F_X(x)=\sum_{r\ge 1} \tc_{r+1}(X)\, x^r,\end{eqnarray}
{\em the $\tc$-generating function} of $X$.
It was observed in \cite{FO} that in many examples this series represents a rational function of a special type.
Based on these computations, it was conjectured in \cite{FO} that {\it for any finite CW-complex $X$ the power series $\mathcal F_X(x)$ is a rational function of the form 
\begin{eqnarray}\label{fx}
\frac{P_X(x)}{(1-x)^2}
\end{eqnarray}
where the numerator $P_X(x)$ is an integer polynomial satisfying $P_X(1)=\cat(X)$. } The conjecture is true for all spheres, closed orientable surfaces, simply connected symplectic manifolds and also for the Eilenberg-MacLane spaces $X=K(H_\Gamma, 1)$ where $H_\Gamma$ is the right angled Artin group determined by a graph $\Gamma$, see \cite{FO}. 

The goal of this paper is to show that the above conjecture is false in the form it was stated in \cite{FO}, namely we show that for a specific finite CW-complex $X$ the power series $\mathcal F_X(x)$ is a rational function of the form (\ref{fx}) however the value $P_X(1)$ is distinct from 
$\cat(X)$. 

\section{Rationality of homological power series}

\subsection{} We start with a general observation:

\begin{lemma}\label{lm3}
Consider a formal power series $\mathcal F(x) = \sum_{r\ge 0} t_r x^r$ with integer coefficients $t_r\in \Z$. 
The following conditions are equivalent:

\begin{enumerate}
\item[{\rm (a)}] $\mathcal F(x)$ represents a rational function of the form 
%\begin{eqnarray*}
%\label{px}
$P(x)\cdot(1-x)^{-2}$
%\end{eqnarray*}
where $P(x)$ is an integer polynomial;
\item[{\rm (b)}] the coefficients $t_r$ satisfy 
for all $r$ large enough
a recurrent equation of the form
$t_{r}=t_{r-1}+a;$
\item[{\rm (c)}]  for some $a$ and $c$ one has 
%\begin{eqnarray*}\label{tr2}
$t_{r-1} +a\le t_{r}\le ra + c$
%\end{eqnarray*}
for all $r$ large enough.
\end{enumerate}

\end{lemma}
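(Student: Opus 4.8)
The plan is to prove the chain $(a)\Leftrightarrow(b)$, then $(b)\Rightarrow(c)$, and finally the substantive implication $(c)\Rightarrow(b)$.

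For $(a)\Leftrightarrow(b)$ I would avoid expanding $(1-x)^{-2}=\sum_{r\ge 0}(r+1)x^r$ directly and instead look at the product $(1-x)^2\mathcal F(x)$. Since $(1-x)^2=1-2x+x^2$, its coefficient of $x^r$ for $r\ge 2$ is the second difference $t_r-2t_{r-1}+t_{r-2}$. Hence $\mathcal F(x)=P(x)(1-x)^{-2}$ for a polynomial $P$ exactly when $t_r-2t_{r-1}+t_{r-2}=0$ for all $r$ large enough, i.e. when the first differences $t_r-t_{r-1}$ are eventually equal to a fixed constant $a$; and in that case $P(x)=(1-x)^2\mathcal F(x)$ has integer coefficients automatically because the $t_r$ do. This is precisely condition (b). (Note $a=t_r-t_{r-1}\in\Z$ for large $r$, so the constant occurring in (b) is necessarily an integer.)

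The implication $(b)\Rightarrow(c)$ is immediate: if $t_r=t_{r-1}+a$ for all $r\ge N$, the left inequality of (c) holds with equality, and iterating gives $t_r=ra+(t_N-Na)$ for $r\ge N$, so the right inequality holds with $c=t_N-Na$.

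The heart of the matter is $(c)\Rightarrow(b)$, and I expect this to be the only step requiring real care. Assume $t_{r-1}+a\le t_r\le ra+c$ for all $r\ge N$, and set $d_r=t_r-t_{r-1}\in\Z$. The left inequality says $d_r\ge a$ for $r\ge N$, so the series $\sum_{r>N}(d_r-a)$ has non-negative terms; telescoping, its partial sum up to $M$ equals $t_M-t_N-(M-N)a$, which by the right inequality is at most $Ma+c-t_N-(M-N)a=c-t_N+Na$, a bound independent of $M$. A non-negative series with bounded partial sums converges, so $d_r-a\to 0$, i.e. $d_r\to a$. Since the $d_r$ are integers, a convergent such sequence must be eventually constant, equal to some $m\in\Z$, and then $m=a$; thus $t_r=t_{r-1}+a$ for all large $r$, which is (b). The decisive point here is the use of integrality of the differences $d_r$ to promote the mere convergence $d_r\to a$ — all that the growth bounds in (c) give on their own — to the rigid recurrence in (b).
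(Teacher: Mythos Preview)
Your argument is correct. The implication $(c)\Rightarrow(b)$ is essentially the paper's: where the paper sets $s_r=t_r-ra$ and observes that $s_r$ is eventually nondecreasing and bounded above (hence convergent, hence eventually constant by integrality), you look at the differences $d_r-a=s_r-s_{r-1}$ and telescope --- the bounded partial sums you obtain are exactly $s_M-s_N$, so it is the same monotone-bounded argument in different clothing.

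Your treatment of $(a)\Leftrightarrow(b)$ does differ from the paper's. The paper expands $P(x)=a_0+a_1(1-x)+(1-x)^2q(x)$ and reads off $t_r=a_0(r+1)+a_1$ for large $r$ from the standard series for $(1-x)^{-2}$ and $(1-x)^{-1}$, then reverses this for $(b)\Rightarrow(a)$. Your route via the second difference --- multiplying by $(1-x)^2$ and noting that the resulting coefficient is $t_r-2t_{r-1}+t_{r-2}$ --- is a bit more direct and handles both directions at once, and it makes the integrality of $P$ automatic. Both approaches also yield $P(1)=a$ (in the paper's notation $a=a_0$; in yours, $P(1)$ is the sum of the second differences, which telescopes to the eventual first difference). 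Neither gains anything substantive over the other; they are equivalent elementary manipulations.
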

\begin{proof} 
First note that $\sum_{r\ge 0} x^r=(1-x)^{-1}$ and $\sum_{r\ge 0} (r+1)x^r=(1-x)^{-2}$. 

Assuming (a), one can write the polynomial $P(x)$ in the form $P(x) = a_0+a_1(1-x) +(1-x)^2 q(x)$ which leads to  
$\mathcal F(x)= \frac{a_0}{(1-x)^2} + \frac{a_1}{(1-x)} + q(x),$
where $q(x)$ is an integer polynomial. Equating coefficients, for $r$ large enough we obtain $t_r=a_0(r+1) +a_1$ and hence $t_{r}=t_{r-1}+a$ with $a=a_0$. Hence (a) $\implies$ (b).

Obviously (b)$\implies$(c). 

Given (c), we may write $t_r=ra +s_r$ and the LHS inequality in (c) gives $s_{r-1}\le s_r$ while the RHS inequality gives $s_r\le c$, for $r$ large. Hence the integer sequence $s_r$ is bounded above and eventually increasing, thus it converges, and therefore it is eventually constant, i.e. $s_r=d$ and $t_r=ra +d$ for all $r$ large enough, implying (b). 

Finally, to show that (b)$\implies$(a) we note that $t_r=ra +d$ for all large $r$ implies that $\mathcal F(x)$ equals the sum of 
$\frac{ax}{(1-x)^2} + \frac{d}{1-x}$ and an integer polynomial and  therefore $\mathcal F(x)$ can be written in form $P(x)\cdot (1-x)^{-2}$ where $P(x)$ is an integer polynomial; note that $P(1)=a$. 
%
%
%Suppose that (\ref{tr}) is satisfied for all large $r$. Then there exists $N>0$ such that for all $r\ge N$ one has
%$t_r= a+ rb$ where $a$ and $b$ are integers. Therefore, 
%$$\mathcal F(x)= \frac{a}{1-x} + \frac{bx}{(1-x)^2} + q(x)$$
%where $q(x)$ is a polynomial with integer coefficients and we see that $\mathcal F(x)$ can be represented in the form (\ref{px}) and the numerator $P(x)$ satisfies $P(1)=b$. 
%
%The inverse statement follows similarly.  If (\ref{px}) is satisfied then expanding $P(x)$ in Taylor series at $x=1$ and equating the coefficients we obtain
%for $r> \deg P-2$ one has
%$t_r=P(1)r-P'(1)$ and hence $t_{r+1} = t_r+P(1).$
\end{proof}
%\begin{definition}
%The value $a\in \Z$ which appears in (b) and (c) will be called {\it the increase}. %It can also be expressed as $a=P(1)$ where $\mathcal F(x)=P(x)\cdot (1-x)^{-2}.$
%\end{definition}

\subsection{The cup-length} Let $A$ be a graded commutative finite dimensional algebra with unit over a field $\k$. We shall assume that $A$ is {\it connected}, i.e. the degree zero part $A^0=\k$ is generated by the unit $1\in A^0$. A typical example is the case when $A=H^\ast(X, \k)$ where $X$ is a connected finite CW-complex. 

We denote by $\cl(A)$ {\it the cup-length of $A$}, it is the largest number of elements of $A$ of positive degree with nonzero product. 
Equivalently, $\cl(A)$ equals the largest number of homogeneous elements of $A$ of positive degree with nonzero product. 

If $B$ is another such algebra one may consider the tensor product $A\otimes B$, the algebra structure is given by 
$(a\otimes b)\cdot (a'\otimes b')= (-1)^{|a'|\cdot |b|} aa' \otimes bb'$. Here $a, a', b, b'$ are homogeneous elements and the symbol $|a|$ denotes the degree of $a$. 

One notes that
\begin{eqnarray}\label{ab}
\cl(A\otimes B) = \cl(A) +\cl (B).
\end{eqnarray}
Indeed, if $a_1, \dots, a_r\in A$ and $b_1, \dots, b_s\in B$ elements of positive degree with $a_1 a_2\dots a_r\not=0\in A$ and 
$b_1 b_2\dots b_s\not=0\in B$ then $$\prod_{i=1}^r a_i\otimes 1\cdot \prod_{j=1}^s 1\otimes b_j \not=0\in A\otimes B.$$
Conversely, to show that $\cl(A\otimes B) \le \cl(A) +\cl(B)$ consider a nonzero product of homogeneous tensors 
$$\prod_{\ell=1}^{\cl(A\otimes B)} a_\ell\otimes b_\ell\, \not= 0\, \in \, A\otimes B.$$
The set of indices $\{1, \dots, \cl(A\otimes B)\}=C_1\sqcup C_2\sqcup C_3$ can be split into 3 disjoint sets where for $\ell\in C_1$ one has $|b_\ell|=0$ and for $\ell\in C_2$ one has $|a_\ell|=0$ and for $\ell\in C_3$ one has $|a_\ell|\not=0\not= |b_\ell|$. Clearly, 
$|C_1|+|C_3|\le \cl(A)$ and $|C_2|+|C_3|\le \cl(B)$ implying $\cl(A\otimes B) = |C_1|+|C_2|+|C_3|\le \cl(A)+\cl(B)$. 

For an integer $r\ge 0$ we denote by $A^r$ the $r$-fold tensor product $$A^r= A\otimes A\otimes \cdots \otimes A\quad \mbox{($r$ factors)}.$$ 
The above discussion gives:
\begin{corollary}\label{cor1} For any $r>0$ one has
$\cl(A^r) = r\cl(A),$ and therefore 
$$\sum_{r\ge 1}\cl(A^r)\cdot x^r=\frac{\cl(A)\cdot x}{(1-x)^2}.$$
\end{corollary}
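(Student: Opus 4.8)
The plan is to first establish the identity $\cl(A^r)=r\,\cl(A)$ by induction on $r$, the entire content being the additivity formula (\ref{ab}) already verified above. The base case $r=1$ is immediate, and one may also record that $\cl(A^0)=\cl(\k)=0$, consistent with the formula. For the inductive step I would write $A^r=A^{r-1}\otimes A$ and apply (\ref{ab}), obtaining
$$\cl(A^r)=\cl(A^{r-1})+\cl(A)=(r-1)\cl(A)+\cl(A)=r\,\cl(A),$$
which closes the induction. Note this is legitimate because $A^{r-1}$ is again a connected graded-commutative finite-dimensional $\k$-algebra, so the hypotheses under which (\ref{ab}) was proved are preserved under forming tensor powers.

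With the closed form for $\cl(A^r)$ in hand, the generating-function statement is a direct substitution. Recalling from the proof of Lemma \ref{lm3} the identity $\sum_{r\ge 0}(r+1)x^r=(1-x)^{-2}$, I would reindex to get $\sum_{r\ge 1} r\,x^r = x\sum_{s\ge 0}(s+1)x^s = x(1-x)^{-2}$, and then
$$\sum_{r\ge 1}\cl(A^r)\cdot x^r=\cl(A)\sum_{r\ge 1} r\,x^r=\frac{\cl(A)\cdot x}{(1-x)^2},$$
as asserted.

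I do not expect any genuine obstacle: all the work has been done in verifying (\ref{ab}), and what remains is an induction plus routine manipulation of formal power series. The only point worth a word of care is the observation in the previous paragraph that the class of algebras to which (\ref{ab}) applies is closed under tensor powers, so that the additivity can be iterated $r$ times; everything else is bookkeeping.
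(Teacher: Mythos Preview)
Your proposal is correct and matches the paper's approach: the corollary is stated immediately after the verification of (\ref{ab}) with the phrase ``The above discussion gives,'' so the intended argument is exactly the induction on $r$ via (\ref{ab}) that you spell out, followed by the routine power-series identity. Your added remark that tensor powers remain connected graded-commutative finite-dimensional $\k$-algebras (so that (\ref{ab}) can be iterated) is a welcome point of care that the paper leaves implicit.
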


\subsection{Zero-divisors-cup-length} As above, let $A$ be a connected graded commutative finite dimensional algebra with unit over a field $k$. 
For any $r\ge 2$ one has the multiplication map
\begin{eqnarray}
\mu_r: A^r \to A \quad \mbox{where}\quad \mu_r(a_1\otimes a_2\otimes \dots\otimes a_r) = a_1 a_2 \cdots a_r.
\end{eqnarray}
This map is a homomorphism of algebras and its kernel has a graded algebra structure. 
The kernel $\ker(\mu_r)$ is called {\it the ideal of zero divisors}. 

\begin{definition}
{\it The $r$-th zero-divisors-cup-length of $A$}, denoted $\zcl_r(A)$, is the longest nontrivial product of elements of $\ker(\mu_r)$. 
\end{definition} 

\begin{lemma}\label{prop1}
The power series $\sum_{r\ge 1} \zcl_{r+1}(A)\cdot x^r$ is a rational function of the form $P(x) \cdot (1-x)^{-2}$ %$\frac{P(x)}{(1-x)^2}$ 
where $P(x)$ is an integer polynomial satisfying $P(1)=\cl(A)$. 
\end{lemma}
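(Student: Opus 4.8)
The plan is to reduce the statement to Lemma~\ref{lm3}. Put $t_r=\zcl_{r+1}(A)$ for $r\ge 1$ and, say, $t_0=0$; these are non-negative integers, and $\sum_{r\ge1}\zcl_{r+1}(A)x^r=\sum_{r\ge0}t_rx^r$, so by Lemma~\ref{lm3} it suffices to verify condition~(c) with $a=\cl(A)$ — the assertion $P(1)=a=\cl(A)$ is then precisely the normalization recorded in the proof of that lemma. Concretely, I must show that for all $r\ge 2$
$$\zcl_r(A)+\cl(A)\ \le\ \zcl_{r+1}(A)\ \le\ (r+1)\,\cl(A),$$
since the right-hand inequality reads $t_r\le r\cl(A)+\cl(A)$, so that one may take $c=\cl(A)$ in~(c).

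The upper bound is immediate from Corollary~\ref{cor1}: $\ker(\mu_{r+1})$ is a graded ideal of $A^{r+1}$, and it contains no nonzero element of degree $0$ because $A^{r+1}$ is connected and $\mu_{r+1}(1)=1$; hence any nontrivial product of homogeneous elements of $\ker(\mu_{r+1})$ is in particular a nontrivial product of positive-degree elements of $A^{r+1}$, so $\zcl_{r+1}(A)\le\cl(A^{r+1})=(r+1)\cl(A)$.

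The main point is the lower bound $\zcl_{r+1}(A)\ge\zcl_r(A)+\cl(A)$, which I would prove by an explicit construction in $A^{r+1}=A^r\otimes A$. Fix homogeneous positive-degree elements $\alpha_1,\dots,\alpha_k\in\ker(\mu_r)$ with $\alpha_1\cdots\alpha_k\ne0$ and $k=\zcl_r(A)$, and homogeneous positive-degree $v_1,\dots,v_m\in A$ with $v:=v_1\cdots v_m\ne0$ and $m=\cl(A)$. Set $\widetilde\alpha_i=\alpha_i\otimes1$ and $\beta_j=(v_j\otimes1\otimes\cdots\otimes1)-(1\otimes\cdots\otimes1\otimes v_j)$, where in the first summand of $\beta_j$ the element $v_j$ occupies the first tensor slot and in the second summand it occupies the $(r+1)$-st slot. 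A direct check gives $\mu_{r+1}(\widetilde\alpha_i)=\mu_r(\alpha_i)\cdot1=0$ and $\mu_{r+1}(\beta_j)=v_j-v_j=0$, so all these elements lie in $\ker(\mu_{r+1})$ and are homogeneous of positive degree. It then suffices to prove that the product $\Pi=\widetilde\alpha_1\cdots\widetilde\alpha_k\cdot\beta_1\cdots\beta_m$ is nonzero.

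To see this, expand $\beta_1\cdots\beta_m$ into its $2^m$ summands indexed by subsets $S\subseteq\{1,\dots,m\}$ (the $j$-th factor contributing its first summand when $j\in S$ and its second summand otherwise), and record the degree of each summand in the last tensor coordinate. The $S$-summand has last coordinate $\pm\prod_{j\notin S}v_j$, of degree $\sum_{j\notin S}|v_j|$, and multiplication by $\widetilde\alpha_1\cdots\widetilde\alpha_k=(\alpha_1\cdots\alpha_k)\otimes1$ (which has degree $0$ in the last coordinate) leaves this degree unchanged. Since every $|v_j|>0$, this degree is strictly largest for the unique choice $S=\emptyset$, whose contribution to $\Pi$ equals $\pm(\alpha_1\cdots\alpha_k)\otimes v$; as $\alpha_1\cdots\alpha_k\ne0$, $v\ne0$, and the tensor product is taken over a field, this contribution is nonzero and cannot cancel against the remaining summands, which live in strictly smaller degrees of the last coordinate. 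Hence $\Pi\ne0$, exhibiting $k+m$ zero-divisors of positive degree with nontrivial product and giving $\zcl_{r+1}(A)\ge k+m=\zcl_r(A)+\cl(A)$. With both inequalities in hand, Lemma~\ref{lm3} completes the proof. I expect the only delicate bookkeeping to be the Koszul signs arising when the $\beta_j$ are multiplied out, but since the argument uses only which graded piece of the last tensor factor each summand occupies, those signs are irrelevant.
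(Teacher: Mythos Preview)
Your proof is correct and follows essentially the same strategy as the paper: both establish the two inequalities $\zcl_{r+1}(A)\le (r+1)\cl(A)$ and $\zcl_{r+1}(A)\ge \zcl_r(A)+\cl(A)$, then invoke Lemma~\ref{lm3}; the construction of the extra zero-divisors in $A^{r+1}$ is identical up to notation. The only cosmetic difference is in verifying $\Pi\ne 0$: the paper applies a linear functional $1\otimes\cdots\otimes 1\otimes\psi$ supported in the top degree of the last tensor factor, whereas you argue directly that the unique top-degree contribution in the last slot cannot cancel --- these are the same observation phrased two ways.
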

\begin{proof} First we observe that (using (\ref{ab})),
\begin{eqnarray}\label{one}
\zcl_r(A)\le \cl(A^r) = r\cdot \cl(A).\end{eqnarray}
Next we claim that 
\begin{eqnarray}\label{two}
\zcl_{r+1}(A) \ge \zcl_r(A) +\cl(A).
\end{eqnarray}
Indeed, suppose that zero-divisors $x_1, \dots, x_\ell\in \ker(\mu_r)$ are such that their product $x_1\cdot x_2\cdots x_\ell\not=0\in \ker(\mu_r)$ is nonzero; here $\ell=\zcl_r(A)$. If $\ell'$ denotes $\cl(A)$, consider elements of positive degree 
$y_1, \dots, y_{\ell'}\in A$ with nonzero product $y_1\cdot y_2\cdots y_{\ell'}\not=0\in A$. For $i=1, 2, \dots, \ell'$ define the elements 
$\overline y_i\in A^{r+1}$ given by 
$\overline y_i=1\otimes 1\otimes \cdots 1\otimes y_i- 
y_i\otimes 1\otimes \cdots\otimes 1.$ Clearly, $\overline y_i$ lies in $\ker(\mu_{r+1})$. 
Besides, let $\overline x_j=x_j\otimes 1\in A^{r+1}$; again, we have $\overline x_j\in \ker(\mu_{r+1})$.

Finally we claim that the product 
$$\Pi= \overline x_1\cdot \overline x_2\cdots \overline x_\ell \cdot \overline y_1\cdot \overline y_2\cdots \overline y_{\ell'}\in A^{r+1}$$
is nonzero. Let $\psi: A\to k$ be a linear functional satisfying $\psi(y_1 y_2\dots y_{\ell'})=1$ and $\psi|{A^i}=0$ for all $i$ distinct from 
the degree of the product $y_1 y_2\dots y_{\ell'}$.  
Then the map $1\otimes 1\otimes \dots\otimes 1\otimes \psi: A^{r+1}\to A^r$ satisfies 
$(1\otimes 1\otimes \dots\otimes 1\otimes \psi)(\Pi) =x_1\cdot x_2\cdots x_\ell\not=0$ showing that $\Pi\not=0$. 
This proves (\ref{two}). 

The statement of Lemma \ref{prop1} now follows (by applying Lemma \ref{lm3})
from (\ref{one}) and (\ref{two}). \end{proof}

In applications we have $A=H^\ast(X;\k)$ is the cohomology algebra of a connected finite CW-complex $X$; then the number
$\zcl_r(A)$ serves as the lower bound for $\tc_r(X)$, see \cite{R}. 

We shall abbreviate the notation $\zcl_r(H^\ast(X,\k))$ to $\zcl_r(X;\k)$ or to $\zcl_r(X)$ assuming that the field $\k$ is specified.

\begin{corollary}\label{cor2}
Let $X$ be a connected finite CW-complex. For any field $\k$, the formal power series 
$\sum_{r\ge 1}\zcl_{r+1} (X;\k)\cdot x^r$
represents a rational function of the form $P(x)\cdot (1-x)^{-2}$ with $P(1) = \cl(H^\ast(X;\k))$.
\end{corollary}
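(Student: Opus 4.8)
The plan is to obtain this as an immediate specialization of Lemma \ref{prop1}. First I would set $A = H^\ast(X;\k)$. Since $X$ is a connected finite CW-complex, its cohomology $A$ is a connected (because $X$ is connected, $A^0 = \k$), graded commutative, finite-dimensional algebra with unit over the field $\k$ — exactly the class of algebras to which Lemma \ref{prop1} applies. The notation $\zcl_{r+1}(X;\k)$ was defined to abbreviate $\zcl_{r+1}(H^\ast(X;\k)) = \zcl_{r+1}(A)$, so the power series $\sum_{r\ge 1}\zcl_{r+1}(X;\k)\cdot x^r$ is literally the series $\sum_{r\ge 1}\zcl_{r+1}(A)\cdot x^r$ appearing in Lemma \ref{prop1}.

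Then I would simply invoke Lemma \ref{prop1}: it tells us this series is a rational function of the form $P(x)\cdot(1-x)^{-2}$ with $P(x)$ an integer polynomial and $P(1) = \cl(A) = \cl(H^\ast(X;\k))$. That is exactly the claimed statement, so there is nothing further to prove. The only points worth checking explicitly are the hypotheses of Lemma \ref{prop1} — finite-dimensionality of $A$ (from finiteness of the CW-structure), connectedness (from connectedness of $X$), and graded commutativity (standard for singular cohomology with field coefficients) — all of which hold automatically.

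There is essentially no obstacle here; the corollary is a transcription of Lemma \ref{prop1} into the topological setting, and the one-line verification of the algebra hypotheses is the entire content. I would keep the proof to two or three sentences.
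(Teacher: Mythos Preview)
Your proposal is correct and matches the paper's approach exactly: the paper states Corollary \ref{cor2} immediately after Lemma \ref{prop1} with no separate proof, treating it as the obvious specialization $A=H^\ast(X;\k)$, which is precisely what you outline. There is nothing to add or change.
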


\begin{theorem}\label{cor3}
Let $X$ be a connected finite CW-complex such that for all large $r$ one has $\tc_r(X)=\zcl_r(H^\ast(X;\k))$ where $\k$ is a field. 
Then the $\tc$-generating function (\ref{ftc}) is a rational function of form $P(x)\cdot (1-x)^{-2}$ with $P(1) = \cl(H^\ast(X;\k))$. 
\end{theorem}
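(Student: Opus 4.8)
The plan is to deduce Theorem \ref{cor3} directly from Corollary \ref{cor2} together with the equivalence (a) $\Leftrightarrow$ (b) established in Lemma \ref{lm3}. By Corollary \ref{cor2}, the power series $\sum_{r\ge 1}\zcl_{r+1}(X;\k)\cdot x^r$ is rational of the form $P(x)\cdot(1-x)^{-2}$ with $P(1)=\cl(H^\ast(X;\k))$; applying the implication (a) $\Rightarrow$ (b) of Lemma \ref{lm3} to the sequence $t_r=\zcl_{r+1}(X;\k)$, we learn that there is an integer $a$ such that $\zcl_{r+1}(X;\k)=\zcl_r(X;\k)+a$ for all sufficiently large $r$, and moreover $a=P(1)=\cl(H^\ast(X;\k))$ (reading off the pole coefficient as in the final line of the proof of Lemma \ref{lm3}).

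Next I would invoke the hypothesis $\tc_r(X)=\zcl_r(H^\ast(X;\k))$ for all large $r$. Substituting this into the recurrence just obtained, the sequence $t'_r=\tc_{r+1}(X)$ satisfies $t'_r=t'_{r-1}+a$ for all large $r$, with the same constant $a=\cl(H^\ast(X;\k))$. This is precisely condition (b) of Lemma \ref{lm3} for the sequence $t'_r$. Here one should note that $\tc_{r+1}(X)\in\Z$ and, since $X$ is a finite CW-complex, the generating function $\mathcal F_X(x)=\sum_{r\ge 1}\tc_{r+1}(X)\,x^r$ is a genuine formal power series with integer coefficients, so Lemma \ref{lm3} applies verbatim.

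Finally I would apply the implication (b) $\Rightarrow$ (a) of Lemma \ref{lm3} to conclude that $\mathcal F_X(x)$ is a rational function of the form $P_X(x)\cdot(1-x)^{-2}$ for some integer polynomial $P_X(x)$, and the same lemma records that $P_X(1)$ equals the recurrence constant, namely $a=\cl(H^\ast(X;\k))$. This completes the proof.

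The argument is essentially a bookkeeping chain linking two sequences that eventually coincide; I do not anticipate a serious obstacle. The only point requiring a little care is making sure the constant $a$ extracted from the recurrence is correctly identified with the residue $P(1)$ on both ends of the chain — but this is exactly what the closing sentences of the proof of Lemma \ref{lm3} provide, so the identification $P_X(1)=\cl(H^\ast(X;\k))$ is immediate once the recurrences are matched up.
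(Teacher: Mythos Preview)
Your argument is correct and aligns with the paper's intent: Theorem~\ref{cor3} is stated there without proof, immediately after Corollary~\ref{cor2}, as an evident consequence of it via Lemma~\ref{lm3}. One could shortcut the recurrence step by observing directly that two power series whose coefficients eventually agree differ by an integer polynomial $q(x)$, so $\mathcal F_X(x)$ equals the series of Corollary~\ref{cor2} plus $q(x) = (1-x)^2 q(x)\cdot(1-x)^{-2}$, which leaves both the form $P(x)(1-x)^{-2}$ and the value $P(1)$ unchanged.
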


\subsection{A counterexample} Theorem \ref{cor3} suggests how to produce an example contradicting the rationality conjecture as stated in \cite{FO}. 
Namely, suppose that 
$X$ is a finite CW-complex satisfying $\tc_r(X)=\zcl_r(X;\k)$ for all large $r$ although $\cl(H^\ast(X;\k))< \cat(X)$. Then the $\tc$-generating function (\ref{ftc}) is a rational function of form $P(x)\cdot (1-x)^{-2}$ while the leading term $P(1)$ {\it is smaller than}  $ \cat(X).$

In section 5 of \cite{Sta} the author constructs a remarkable finite CW-complex $X$. In general $X$ depends on a choice of a prime $p$ but
for our purposes in this paper we shall for simplicity assume that $p=3$. The complex $X$ has 3 cells of dimensions 2, 3, 11. Its main properties are:
\begin{enumerate}
\item $\cat(X)=2$;
\item $\cat(X\times X)=2$;
\item For any non-contractible space $Y$ one has $$\cat(X\times Y)< \cat(X)+\cat(Y).$$ 
\end{enumerate}
In particular $\cat(X\times S^n) = \cat(X)=2$ for any $n\ge 1$, i.e.
$X$ is a counterexample to the Ganea conjecture. 
Using these properties we find by induction that $\cat(X^r) \le r$ for any $r\ge 2$. 

The cohomology algebra 
$A=H^\ast(X;\k)$
of $X$ has generators $a_2, a_3, a_{11}$ of degrees $2, 3, 11$ correspondingly; all pairwise products of the generators vanish for dimensional reasons. 
We see that the cup-length $$\cl(A) =\cl(H^\ast(X;\k))= 1$$ and using Corollary \ref{cor1} we obtain $\cl(H^\ast(X^r;\k))=r$ for any $r$. 
Thus combining with information given above, we see that for any $r\ge 2$ one has
\begin{eqnarray}\label{onehas}
\cat(X^r)=r. 
\end{eqnarray}

Next we show that $\zcl_r(A)\ge r$ for any $r\ge 2$, where $A=H^\ast(X;\k)$. 
Consider the class 
$x=a_2\otimes 1- 1\otimes a_2\in \ker(\mu_2)\subset A^2.$
We see that $x^2= -2 a_2\otimes a_2\not=0$ which implies $\zcl_2(A)\ge 2$. 
Applying (\ref{two}) inductively we deduce $\zcl_r(A)\ge r$ for any $r\ge 2$.
Therefore, 
%This follows by induction from (\ref{two}) once we show that $\zcl_2(A)\ge 2$. 
%
%For $i=1, \dots, r$ let $x_i\in A^r=A\otimes A \otimes \dots\otimes A$ denote the difference of two tensors
%$$x_i =1\otimes 1\otimes \dots \otimes a_2\otimes 1\otimes \dots \otimes 1 - 1\otimes 1\otimes \dots \otimes a_2\otimes 1\otimes \dots \otimes 1$$
%where in the first summand the element $a_2$ appears on position $i$ and in the second summand it appears on position $i+1$. 
%Note that for $i=r$ the index $i+1$ should be understood mod r, i.e. as $1$. Clearly $x_i$ is a zero-divisor, i.e. $x_i\in \ker(\mu_r)$. The square $x_i^2$ equals $-2$ times the tensor 
%$$1\otimes \dots 1\otimes a_2\otimes a_2\otimes 1\otimes \dots \otimes 1$$ where the class $a_2$ stands on positions $i$ and $i+1$ and the rest positions are occupied by ones. If $r=2s$ is even, we obtain that the product 
%$$x_1^2 x_3^2 \dots x_{2s-1}^2\not=0\in \ker(\mu_r)$$
%is nonzero and if $r=2s+1$ is odd, we have a nontrivial product 
%$$x_1^2 x_3^2 \dots x_{2s-1}^2 x_{2s+1}\not=0\in \ker(\mu_r).$$
%Thus we see that for any $r\ge 2$ one has $\zcl_r(A) \ge r$ and therefore 
\begin{eqnarray}\label{onehas2}
\tc_r(X) \ge r, \quad \mbox{for any} \, r\ge 2.
\end{eqnarray}
Combining (\ref{onehas}) and (\ref{onehas2}) with the well-known inequalities $\zcl_r(X) \le \tc_r(X)\le \cat(X^r)$ we obtain
\begin{eqnarray}
\tc_r(X)=r =\zcl_r(X)\quad \mbox{for any} \, r\ge 2.
\end{eqnarray}
By Theorem \ref{cor3} the $\tc$-generating function $\sum_{r=1} \tc_{r+1}(X) \cdot x^r$ is a rational function of the form 
$P(x)\cdot (1-x)^{-2}$ where $P(x)$ is an integer polynomial with $P(1)=1$. Here $1=\cl(A)< \cat(X)$. Thus this example violates the original rationality conjecture of \cite{FO}. 

\subsection{Conclusion} The discussion above suggests a weaker version of the rationality conjecture which can be stated as follows: {\it for any finite CW-complex $X$ the $\tc$-generating function $\mathcal F_X(x) = \sum_{r\ge 1} \tc_{r+1}(X)\cdot x^r$ is a rational function with a single pole of order 2 at $x=1$}.

\end{document}